\def \Rot{{\sf Rot}}
\newcommand*\Bell{\ensuremath{\boldsymbol\ell}}
\def \LAA#1#2#3{{\cal T}_{#2}^{\,\bullet,#3}(#1)}
\def \EA#1#2{{\cal T}_{#2}^{\,\,\textbf{\SYN}\,,d-1}(#1)}
\def \A#1#2{{\cal T}_{#2}(#1)}
\def \a#1#2{|\A{#1}{#2}|}
\def \F#1#2{{\cal F}_{#2}^{\,\bullet\,,d-1}(#1)}
\def \Fp#1#2{{\cal F}_{#2}^{\,\bullet\,,d-1,{\sf Ex}}(#1)}
\def \SS#1#2{{\sf Subset}_{#2}(#1)}
\def \Buds#1{{\sf Buds}(#1)}
\def \Words#1{{\sf Words}(#1)}
\def \Alphabet#1{{\sf Alphabet}(#1)}
\def \bt{\textbf{t}}
\def \Be{\textbf{e}}
\def \Bf{\textbf{f}}
\def \Bg{\textbf{g}}
\def \Ba{\textbf{a}}
\def \Cut{{\sf Cut}}
\def \Rotate{{\sf Rotate}}
\def \AddRoot{{\sf AddRoot}}
\def \Excursions{{\sf Excursions}}
\newtheorem{lem}{Lemma}
\newtheorem{defi}[lem]{Definition}
\def \bls{{\tiny $\blacksquare$ \normalsize }}
\DeclareMathOperator{\argmin}{argmin}
\def \1{\textbf{1}}
\def \Z{\mathbb{Z}}
\def \bt{{\bf t}}
\def \bpar#1{\left\{\begin{array}{#1} }
\def \epar { \end{array}\right.}
\def \app#1#2#3#4#5{\begin{array}{rccl} #1:&#2&\longrightarrow&#3\\ &#4&\longmapsto&#5\end{array}}
\def \ba{\begin{align}}
\def \ea{\end{align}}
\def \be{\begin{eqnarray*}}
\def \ee{\end{eqnarray*}}
\def \ben{\begin{eqnarray}}
\def \een{\end{eqnarray}}
\def \beq{\begin{equation}}
\def \eq{\end{equation}}
\def \build#1#2#3{\mathrel{\mathop{\kern 0pt#1}\limits_{#2}^{#3}}}
\def \ba{{\bf a}}
\def \captionn#1{\begin{center}\begin{minipage}{16.5cm}\sf\caption{\small \textsf{#1}}\end{minipage}\end{center}}
\def \eref#1{(\ref{#1})}
\def \floor#1{\lfloor#1\rfloor}
\def \P{{\mathbb{P}}}
\def \l{\left}
\def \r{\right}
\def \sous#1#2{\mathrel{\mathop{\kern 0pt#1}\limits_{#2}}}
\def \sur#1#2{\mathrel{\mathop{\kern 0pt#1}\limits^{#2}}}
\def\cro#1{\llbracket#1\rrbracket}
\newcommand{\compact}{ \topsep0pt   \itemsep=0pt   \partopsep=0pt   \parsep=0pt}
\newcounter{c}
\def \bir{\begin{itemize}\compact \setcounter{c}{0}}
\def \eir{\end{itemize}\vspace{-2em}~}
\newcounter{d}
\def \bia{\begin{itemize}\compact \setcounter{d}{0}}
\def \eia{\end{itemize}\vspace{-2em}~}
\newcounter{b}
\def \bi{\begin{itemize}\compact \setcounter{b}{0}}
\def \ei{\end{itemize}\vspace{-2em}~}
\begin{document}

\renewcommand{\baselinestretch}{1.15}
\begin{center}
\LARGE
\textbf{Growing random uniform $d$-ary trees} \medbreak
\large \medbreak
\textbf{ Jean-Fran\c{c}ois Marckert}
\medbreak
\large{CNRS, LaBRI, Universit\'e de Bordeaux}
\normalsize 
\end{center}
\renewcommand{\topfraction}{0.75}

\renewcommand{\textfraction}{0.1}
\renewcommand{\floatpagefraction}{0.75}

\begin{abstract} Let $\A{n}{d}$ be the set of $d$-ary rooted trees with $n$ internal nodes. We give a method to construct a sequence $( \bt_{n},n\geq 0)$ where, for any $n\geq 1$, $ \bt_{n}$ has the uniform distribution in $\A{n}{d}$, and $ \bt_{n}$ is constructed from $ \bt_{n-1}$ by the addition of a new node, and a rearrangement of the structure of $ \bt_{n-1}$. This method is inspired by Rémy's algorithm which does this job in the binary case, but it is different from it. This provides a method for the random generation of a uniform $d$-ary tree in $\A{n}{d}$ with a cost linear in $n$. 
\end{abstract}

\section{Introduction}
\label{sec:intro}

Notation: in the paper, we denote by $\cro{a,b}$ the ordered list of integers belonging to $[a,b]\cap \Z$.\medbreak

Rooted planar trees are often represented as on \Cref{fig:tt}: there is a root, and the children of the nodes are distinguishable.
 \begin{figure}[htbp]
   \centerline{\includegraphics{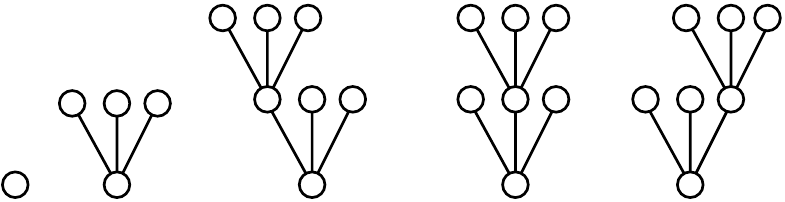}}
   \captionn{\label{fig:tt} The first tree is the single element of $\A{0}{3}$, the next one, the single element of $\A{1}{3}$ and the next tree, the three elements of $\A{2}3$.}
 \end{figure}\par
A $d$-ary tree is a rooted planar tree in which all nodes have either $d$ children or none. Nodes with degree $d$ are called \textit{internal nodes}, the other ones are called {\it leaves}. 
Let  $\A{n}{d}$ be the set of  $d$-ary trees with $n$ internal nodes: $\A{n}{2}$ is the set of standard binary trees, and $\A{n}{3}$ the set of standard ternary trees (see Fig. \ref{fig:tt}). It is well known that, for any $n\geq 0, d\geq 1$, 
\ben\label{eq:dqfrz}
|\A{n}{d}|= \binom{dn+1}{n}/(dn+1);
\een
this can be proved, by decomposing  $d$-ary trees at their root, and by a simple recurrence, or also by the rotation principle (see Sec. \ref{sec:Trp}).
 Each tree $t\in \A{n}{d}$ satisfies
\ben \label{eq:sfhtejku}|t|&=& dn+1,~~ |t^o|=n,~~|\partial t|=(d-1)n+1,~~|E(t)|=dn,\een
where $|t|$ denotes the number of nodes, $t^o$ the set of internal nodes of $t$, $\partial t$ the set of leaves of $t$, and $E(t)$ the set of edges of $t$. 
 
The main aim of this paper is to describe a procedure which produces a random uniform tree $\bt_{n+1}$ in $\A{n+1}{d}$, when one possesses a uniform tree $\bt_{n}$ of $\A{n}{d}$ using some simple operations and the introduction of some (small) additional randomness, of course. The construction we propose, similar in spirit to Rémy's construction \cite{Remy} (binary case), is different from it, even in the case $d=2$ (we discuss the differences in Section \ref{sec:CRB} and provide a third construction in the case $d=2$). By $n$ application of our procedure, it is possible to sample a uniform element of $\A{n}{d}$ starting from a uniform tree in $\A{0}{d}$ (the tree reduced to its root). The details will be given in \Cref{sec:qgegf}.
 
\paragraph{Related works.}
Additionally to Rémy's construction, Marchal \cite{Marchal} proved that Rémy's construction can be used to build an increasing sequence of uniform Dyck path (since uniform binary trees with $n$ internal nodes can be encoded by uniform Dyck path with $2n$ steps), and proved that normalized by $\sqrt{n}$, this sequence of Dyck path converges almost surely in $C[0,1]$ equipped with the topology of uniform convergence. Evans, Grübel and Wakolbinger study the Doob-Martin boundary of Rémy's tree growth chain in \cite{EGW}. 

Bettinelli \cite{JB}, through a bijective approach, gives a method to construct uniform rooted quadrangulations with $n$ faces inductively (from a uniform quadrangulations with $n-1$ faces), and a related method to build uniform forests (with a fixed number of edges).\par

Haas \& Stephenson \cite{H-S} study a model of growing $d$-ary trees, with a construction similar to that of Rémy's, that is, a node with degree $d$ is inserted at each round ``inside a uniformly chosen random edge''; but in the case of $d$-ary tree (with $d\geq 3$), as proved in \cite{H-S}, this method does not produce a sequence of uniform $d$-ary trees; even the order of the height of this model of trees does not fit with uniform $d$-ary tree, since the height order is $n^{1/k}$, when uniform $d$-ary trees have a height of order $\sqrt{n}$ (Aldous \cite{Aldous}).

\paragraph{From a bijection to a growing procedure.}

The construction we propose relies on a new bijection between a set of edge-marked trees with $n$ internal nodes (built over $\A{n}{d}$), and a set of leaf-marked trees with $n+1$ internal nodes (built over $\A{n+1}{d}$). Let us start from the following simple observation: 
 \eref{eq:dqfrz} is equivalent to
\ben\label{eq:deco}
\binom{  \left( d-1 \right) (n+1)+1}{d-1}\times \a{n+1}{d} =d \times \binom{ dn+d-1}{d-1 }\times \a{n}d.
\een
Let us interpret the different elements appearing in this relation. For $S$ a set, and $m\geq 0$, denote by $\SS{S}{m}$ the set of subsets of $S$ with cardinality $m$. 
Of course, $|\SS{S}{m}|=\binom{|S|}{m}$.
\begin{defi} For $n\geq 0$, denote by $\LAA{n}{d}{m}$ the set of pairs $(\bt,\Bell)$ where $\bt\in \A{n}{d}$ and $\Bell\in \SS{\partial \bt}{m}$: in words the set of $d$-ary trees with $n$ internal nodes and  $m$ distinguished leaves. An element of $\LAA{n}{d}{m}$ is called a \underline{$m$-leaf-marked trees} of size $n$.
\end{defi} Since all trees in $\A{n+1}{d}$ have $\left( d-1 \right) (n+1)+1$ leaves, we have
\ben\label{eq:qdqdfeaf}
|\LAA{n+1}{d}{d-1}| &=& \binom{  \left( d-1 \right) (n+1)+1}{d-1}\times \a{n+1}{d},
\een
and this is precisely the left hand side of \eref{eq:deco}.

Introduce a set of cardinality $d-1$, which  can be seen as an ordered list of extra available edges:
\ben{\sf Buds}(d)=\{b_0,\cdots,b_{d-2}\}.
\een
\begin{defi}  For any $n\geq 0$, we denote by $\EA{n}{d}$ the set of pairs $(\bt,\Be)$ where $\bt\in\A{n}{d}$ and $\Be$ is an element of $\SS{E(\bt) \cup \Buds{d}}{d-1}$; this is the set of \underline{edges marked trees} of size $n$.
  Hence, the marks are shared between $\Buds{d}$ and the edge set $E(\bt)$ of $\bt$, and their total number is $d-1$.
\end{defi}
By \eref{eq:sfhtejku}, $|E(\bt) \cup \Buds{d}|=dn+d-1$ for any tree on $\bt\in \A{n}{d}$, so that
\ben\label{eq:sqdrg}
|\EA{n}{d}|= \binom{ dn+d-1}{d-1 } \times |\A{n}d|,
\een
and then the right hand side of \eref{eq:deco} is:
\ben\label{eq:sqdrgdq}
|\EA{n}{d} \times \cro{1,d}|&=& d\times \binom{ dn+d-1}{d-1 } \times |\A{n}d|.
\een
The main part of the rest of the paper is devoted to describe a new bijection between  $\LAA{n+1}{d}{d-1}$ and $\EA{n}{d} \times \cro{1,d}$. It worth probably a moment thought if it is not clear enough: an explicit bijection between these sets allows to construct a procedure to produce a uniform tree $\bt_{n+1}$ in $\A{n+1}{d}$ from a uniform tree $\bt_{n}$ in $\A{n}{d}$ (see \Cref{sec:qgegf}).

To describe our bijection between $\LAA{n+1}{d}{d-1}$ and $\EA{n}{d} \times \cro{1,d}$, we will need two additional families of objects: The set of \underline{leaf marked forests} (see Fig. \ref{fig:manip}).

\begin{defi} We set 
\ben
\F{n}{d}= \bigcup_{n_1,\cdots,n_d\geq 0 \atop{n_1+\cdots+n_d=n}} \bigcup_{
  {m_1,\cdots,m_d\geq 0\atop{m_1+\cdots+m_{d}=d-1}}} \LAA{n_1}{d}{m_1}\times \cdots \times \LAA{n_d}{d}{m_d},
\een that is the \underline{set of leaf-marked forests} $f=((f^{(0)},\ell^{(0)}),\cdots,(f^{(d-1)},\ell^{(d-1)}))$ made of $d$ trees, each of them being a $d$-ary tree, with a total number of $n$ internal nodes, and a total of $d-1$ marked leaves. \par
For such a  leaf-marked forest $f$, define the ``leaf sequence'' $s(f)=(s_i(f),0\leq i \leq d)$, defined by
\ben\label{eq:siF}
s_i(f)&=& (|\ell^{(0)}|-1)+\cdots+(|\ell^{(i-1)}|-1) \textrm{ for }i \in\cro{0,d} 
\een
which is the path starting at 0, and whose increments are the $|\ell^{(j)}|-1$. Since there are $d-1$ marks, we have $s_d(f)=-1$. \par
Denote by $\Fp{n}{d}$ the subset of $\F{n}{d}$ made of \underline{excursion type forests}
\ben
\Fp{n}{d}=\l\{ f \in \F{n}{d}~: s_i(f)\textrm{ for all }i\in\{0,\cdots,d-1\},s_d(f)=-1\r\}.\een
\end{defi}
In fact, to be an excursion type forest is a property concerning the leaf sequence.
\begin{figure}[htbp]
  \centerline{\includegraphics{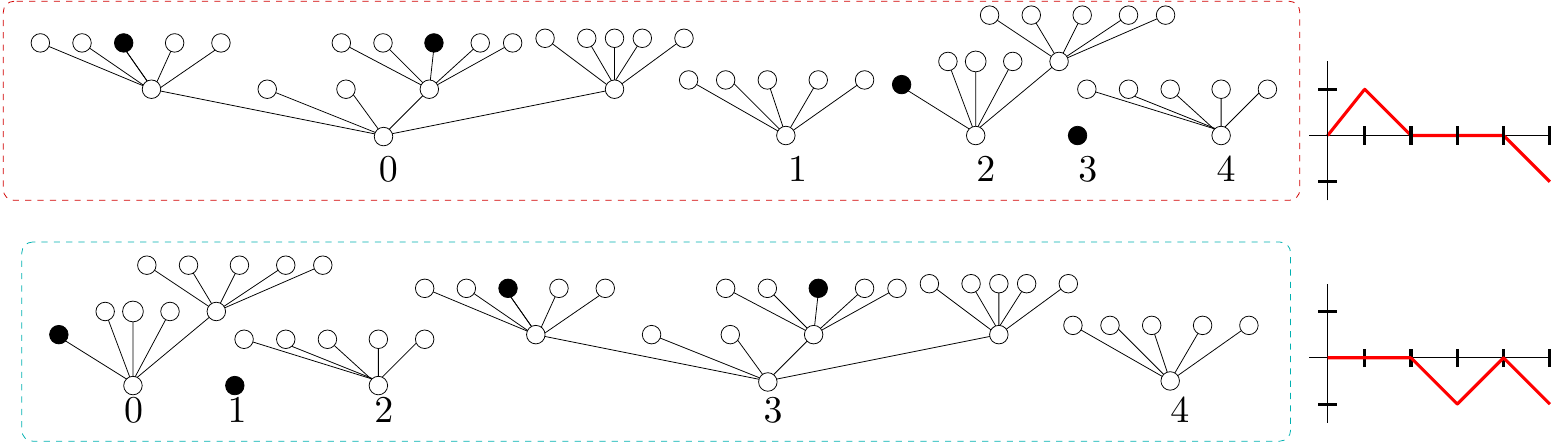}}
  \captionn{\label{fig:manip}
    The first forest belongs to $\F{8}{5}$: it is a 5-ary forest with a total of $8$ internal nodes and 4 marked leaves, and 5 roots. Since $(|\ell^{(0)}|,\cdots,|\ell^{(4)}|)=(2,0,1,1,0)$, the associated leaf sequence has increments $(1,-1,0,0,-1)$ as represented on the path at the right of the forest. The leaf sequence is non negative except at the very last position, so that this forest belongs to $\Fp{8}{5}$. The second forest belongs to  $\F{8}{5}$, but now $(|\ell^{(0)}|,\cdots,|\ell^{(4)}|)=(1,1,0,2,0)$, from what it can be seen that the associated leaf sequence is negative at position 3, so that this forest is not in  $\Fp{8}{5}$.}
  \end{figure}
Since the map which return the list of subtrees rooted at the children of the root, is a bijection from $\LAA{n+1}{d}{d-1}$ onto $\F{n}{d}$, 
\ben\label{eq:dqdfqs}
|\F{n}{d}|= \l|\LAA{n+1}{d}{d-1}\r|.
\een
We also have by the application of the rotation principle (see Section \ref{sec:Trp} for some recall) that
\ben\label{eq:dqftfedq}
|\Fp{n}{d}| = |\F{n}{d}|\, /\, d.
\een
We then have by \eref{eq:deco}, \eref{eq:qdqdfeaf},\eref{eq:sqdrg} and \eref{eq:dqdfqs}
\ben
\l|\EA{n}{d}\times\cro{1,d}\r|= \l|\F{n}{d}\r| = \l|\cro{1,d}\times \Fp{n}{d}\r|=\l|\LAA{n+1}{d}{d-1}\r|.
\een
All these sets having the same cardinality, there exists some bijective correspondences between them, but for the random generation purpose, we want to propose some bijections that preserves as much as possible the forest/tree structures.
We will decompose our bijection between $\EA{n}{d}\times\cro{1,d}$ and $\LAA{n+1}{d}{d-1}$ as the composition of three bijections, 
\be
\begin{array}{ccccc}
  & \Cut & &{\sf Rotate} & \\
  \EA{n}{d}\times\cro{1,d}& \longmapsto & \Fp{n}{d}\times\cro{1,d} &  \longmapsto & \F{n}{d}\\
  \l((\bt_n, \Be), \Ba\r) & \longrightarrow &
\!\!\!\!\l[\l(\Big(\Bg^{(i)},\Bell^{(i)}_{\Bg}\Big),0\leq i \leq d-1\r),\Ba\r]\!\!\!\! & \longrightarrow& \!\!\!\!\l(\Big(\Bf^{(i)},\Bell^{(i)}\Big),0\leq i\leq d-1\r) \\
  & \!\!\!\!\AddRoot\!\!\!\! & & & \\
  & \longmapsto  &\LAA{n+1}{d}{d-1} & & \\
  & \longrightarrow & (\bt_{n+1},\Bell) & &
\end{array}
\ee
{\bf NB}: we should have marked a dependence in $(n,d)$ in these bijections (with an index or an exponent), but renounce to do it, to avoid too heavy notations.\par

In fact, we will see that
\begin{lem}
Each of the map  $\Cut$, $\Rotate$, $\AddRoot$ is a bijection so that 
\ben
{\sf Enlarge}: \EA{n}{d}\times\cro{1,d} \mapsto \LAA{n+1}{d}{d-1}
\een
defined by ${\sf Enlarge}:=\AddRoot \circ \Rotate  \circ  \Cut$
is a bijection, whose inverse is
\ben
{\sf Reduce} := \Cut^{-1} \circ \Rotate^{-1}  \circ \AddRoot^{-1}.
\een
\end{lem}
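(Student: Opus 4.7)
The plan is to establish that each of the three component maps $\Cut$, $\Rotate$, $\AddRoot$ is a bijection separately; the final claim, that $\Enlarge$ is a bijection with inverse $\Reduce$, then follows from the standard identity $(h \circ g \circ f)^{-1} = f^{-1} \circ g^{-1} \circ h^{-1}$. For $\AddRoot \colon \F{n}{d} \to \LAA{n+1}{d}{d-1}$, the argument is essentially already contained in the text preceding the lemma. The map sending a tree in $\A{n+1}{d}$ to the ordered $d$-tuple of subtrees rooted at the children of its root is a bijection onto the set of $d$-tuples of $d$-ary trees with total internal-node count $n$ (this is \eref{eq:dqdfqs}); distinguishing $d-1$ leaves on the domain side corresponds bijectively to distributing $d-1$ marks among the leaves of the subtrees on the codomain side. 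The map $\AddRoot$ is the inverse operation, which attaches a new common root above the $d$ component trees, and so is clearly a bijection.

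For $\Rotate \colon \Fp{n}{d} \times \cro{1,d} \to \F{n}{d}$, the plan is to invoke the rotation principle. The cyclic group $\Z/d\Z$ acts on $\F{n}{d}$ by cyclically permuting the $d$ component trees. The increments $|\ell^{(j)}|-1$ of the leaf sequence are integers $\geq -1$ that sum to $-1$, so the classical cycle lemma guarantees that every orbit under this action contains at least one excursion-type representative. Combined with the cardinality identity $|\Fp{n}{d}| = |\F{n}{d}|/d$ from \eref{eq:dqftfedq}, this forces every orbit to have cardinality exactly $d$ and to contain exactly one element of $\Fp{n}{d}$. Consequently the map sending $(f,k)$ to the $k$-th cyclic rotation of $f$ is bijective onto $\F{n}{d}$, and this is precisely $\Rotate$.

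For $\Cut \colon \EA{n}{d} \times \cro{1,d} \to \Fp{n}{d} \times \cro{1,d}$, the second coordinate $\Ba$ is transported unchanged, so it suffices to exhibit a bijection from $\EA{n}{d}$ onto $\Fp{n}{d}$. The plan is to describe the construction explicitly and produce an explicit candidate inverse, then verify that the two compositions are the identity. Intuitively, the $d-1$ marks in $\Be$ prescribe modifications of $\bt_n$: each edge mark in $E(\bt_n)$ is a genuine cut (splitting a subtree off and leaving a marked leaf behind), while each bud in $\Buds{d}$ contributes a marked leaf without cutting. A short count shows that one thereby obtains $d$ component trees with $n$ internal nodes and $d-1$ marked leaves in total. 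Reading the marks in the depth-first order of $\bt_n$ should fix the order of the resulting $d$-tuple and enforce the excursion condition on the leaf sequence; the candidate inverse reassembles $\bt_n$ by gluing the components back together in the order dictated by the leaf sequence and reinterpreting leftover marked leaves as either restored edges or buds. The main obstacle is precisely this last verification: one must check that the depth-first reading produces a leaf sequence satisfying $s_i \geq 0$ for $i \in \cro{0,d-1}$, and that the reassembly map is genuinely a two-sided inverse of $\Cut$. These are combinatorial bookkeeping arguments tracking how individual edges and buds of $\bt_n$ contribute to the leaf-sequence increments and to the root positions of the output forest.
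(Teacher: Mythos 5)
Your overall strategy---prove that each of $\Cut$, $\Rotate$, $\AddRoot$ is a bijection and then compose---is exactly the paper's, and two of the three pieces are handled adequately. The $\AddRoot$ part is indeed just the observation behind \eref{eq:dqdfqs}. For $\Rotate$, your argument (the cycle lemma gives at least one excursion-type element per rotation orbit; the count $|\Fp{n}{d}|=|\F{n}{d}|/d$ of \eref{eq:dqftfedq} then forces every orbit to have size exactly $d$ with exactly one excursion element) is a valid minor variant of the paper's route, which instead proves directly, via the first-hitting-time-of-the-minimum argument, that each rotation class has exactly $d$ distinct elements of which exactly one is an excursion; note that the paper derives \eref{eq:dqftfedq} itself from that same rotation principle, so both versions lean on the same underlying fact.

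The genuine gap is in $\Cut$, which is the heart of this lemma and of the paper. You explicitly name ``the main obstacle''---(i) that the image of $\Cut$ actually lands in $\Fp{n}{d}\times\cro{1,d}$, i.e.\ that the leaf sequence of the output forest satisfies $s_i(f)\geq 0$ for $i\in\cro{0,d-1}$, and (ii) that the reassembly map is a two-sided inverse---and then you carry out neither, dismissing them as combinatorial bookkeeping. They are not: (i) needs an actual argument, and without it you have not even shown that $\Cut$ maps into its claimed codomain, so ``bijection'' is not yet meaningful. The paper's argument is: increments coming from marked buds are $0$; the component of index $d-1$ is the \emph{first} fragment detached, which carries no marked leaf (a detachment deposits its mark on the part that remains attached, never on the piece cut off), so its increment is $-1$; and every other detached fragment leaves a marked leaf in the still-attached tree, hence in a component of strictly smaller index, so the partial sums $N_j=s_{m_j}(f)+1$ count the not-yet-visited fragments hanging off the first $j-1$ ones and remain $\geq 1$ until the end. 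Moreover, the paper shows you can avoid (ii) entirely: it checks injectivity directly---the one subtle point being that a single-node component produced by cutting has an \emph{unmarked} root, whereas one produced from a bud has a \emph{marked} root, so the two cannot be confused---and then concludes surjectivity from the cardinality chain given by \eref{eq:deco}, \eref{eq:qdqdfeaf}, \eref{eq:sqdrg}, \eref{eq:dqdfqs} and \eref{eq:dqftfedq}. Your plan of constructing an explicit inverse and verifying both compositions is strictly heavier than this injectivity-plus-counting shortcut, and it is precisely the part you left undone; as it stands the proposal is a correct outline with the decisive step missing.
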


 Figure \ref{fig:cut} allows to understand the mechanism of the bijection, and it is even possibly sufficient to some readers to understand the complete picture.
\begin{figure}[htbp]
  \centerline{\includegraphics{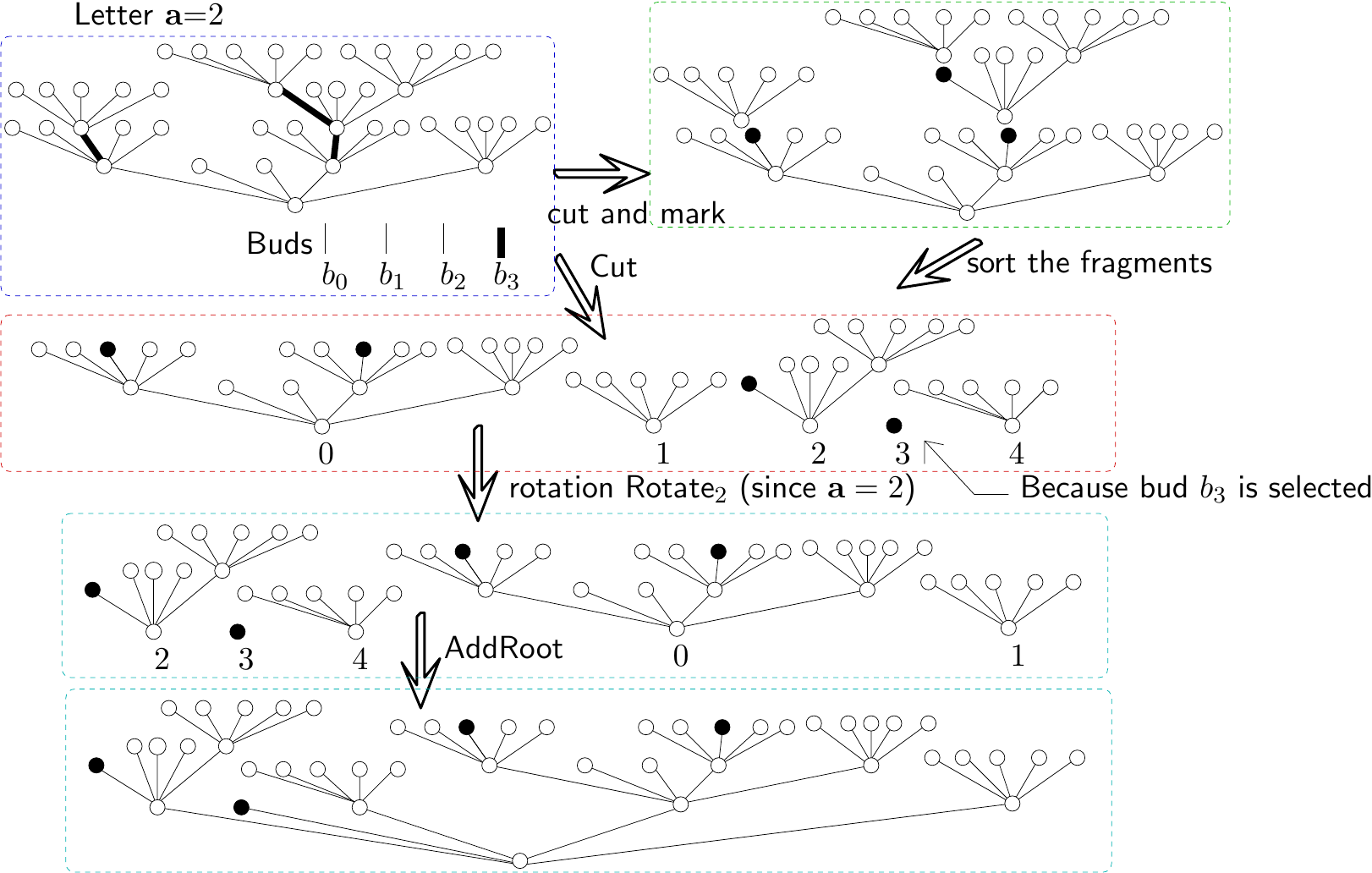}}
  \captionn{\label{fig:cut} Illustration of the $\Cut, \Rotate,\AddRoot$ maps applied to an element $((\bt,\Be),\Ba)$ with $\Ba=2$ of $\EA{8}{5}\times\{1,\cdots,5\}$, in which the buds $b_2$ is marked as well as 3 edges of $\bt$ (darkened).\\
  $\bullet$ \underline{${\sf Cut}$} disconnects the tree at the top extremity of all marked edges, remove the marks on the edges, marks the top extremities of the ancient marked edges, and sort the fragments according to their least vertices for the lexicographical order, in the initial tree. If the buds $(b_{i_j},1\leq j \leq m)$ are selected, then some trees reduced to marked nodes are inserted in the fragment list, so that that these marked nodes represent the trees $\bt_{i_j}$ in the final forest (here $b_3$ is selected so the tree $t_3$ in the list $(t_0,t_1,t_2,t_3,t_4)$ is a simple marked node). This gives an element of  $\Fp{8}{5}$ (since the corresponding walk $s(f)$ satisfies $s(f)=(0,1,0,0,0,-1)$). \\
$\bullet$ \underline{$\Rotate$} is simple enough since it shifts the indices of the forest by $\Ba$ in $\Z/d\Z$, where ${\Ba}$ is the selected letter. It produces an element of $\F{8}{5}$ (here since $\Ba=2$, the image is $(t_2,t_3,t_4,t_0,t_1)$). The fact that it is invertible is not evident since ${\Ba}$ must be recovered from the image only.\\
$\bullet$  \underline{$\AddRoot$} is totally trivial: its name describes its action.\\
Finally, after composition of the three maps, and element of $\LAA{9}{5}{4}$ (9 internal nodes,  4 marked leaves) is produced.}
  \end{figure}

\section{Formal description of the bijections and proofs}

\subsection{Classical encoding of trees}

To define formally the trees, we use the language of set theory (as proposed by Neveu \cite{Neveu1986}). See Fig. \ref{fig:tt2} for an illustration. \begin{figure}[htbp]
   \centerline{\includegraphics{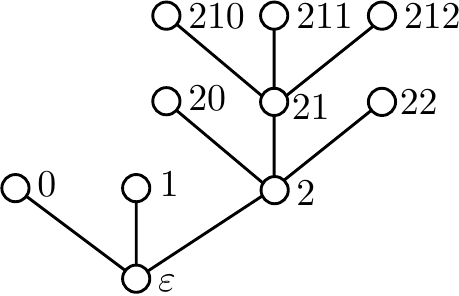}}
   \captionn{\label{fig:tt2}This tree is $\{`e,0,1,2,20,21,22,210,211,212\}$.}
   \end{figure}
Set $\Alphabet{d}=\cro{1,d}$, and the associated set of finite words is
\[\Words{d}= \{`e\}\cup \bigcup_{m\geq 1}\Alphabet{d}^m,\]
where $`e$ is the empty word. The set $\Words{d}$ is the infinite complete $d$-ary tree: each word $w$ stands for a node, and $wj$ stands for the $j$th child of $w$ (see \Cref{fig:tt2}). The vocabulary of genealogy is introduced: if $u$ is a prefix of $v$, then $u$ is called an ancestor of $v$, the largest strict prefix of $v$ is the parent $p(v)$ of $v$; the node $`e$ is the root. \par 
A $d$-ary tree is a subset $T$ of $\Words{d}$ satisfying the following constraints: \\
$\bullet$ $T$ contains $`e$,\\
$\bullet$ $T$ is stable by prefix ( $(u_1,\cdots,u_h)\in T$ implies that $(u_1,\cdots,u_{h-1})\in T$),\\
$\bullet$ $T$ contains the $d$ descendants of each internal node: $(u_1,\cdots,u_h)\in T$ implies that $(u_1,\cdots,u_{h-1},j)\in T$ for all $j \in\Alphabet{d}$.\medbreak

An edge of $T$ is a pair $(u,p(u))$ between a node $u$ of $T$ and its parent $p(u)$, assumed to be directed toward $p(u)$. The lexicographical order makes of each tree a totally ordered set: the prefix of a word is smaller than the word itself, and $`e$ is the smallest element of $\Words{d}$.

\subsection{The map $\Cut$}
Let us describe the map
\ben
\app{\Cut}{\EA{n}{d}\times\cro{1,d}}{\Fp{n}{d}\times\cro{1,d}}{\l((\bt_n, \Be), \Ba\r)}{
  \l[\l(\Big(\Bg^{(i)},\Bell^{(i)}_{\Bg}\Big),0\leq i \leq d-1\r),\Ba\r]}
\een
Observe that the letter $\Ba$ is not modified, and in fact, it is even not used to define $\Cut$. It is present only because we are dealing with some chained bijections, and it is needed to define $\Rotate$. In this section, it can be ignored.\\
\bls Let us concentrate only the definition of $(\bt_n, \Be)\to \l(\Big(\Bg^{(i)},\Bell^{(i)}_{\Bg}\Big),0\leq i \leq d-1\r)$. \par
The set ${\bf e}$ contains $d-1$ elements: $k(1)$ are edges of $\bt_n$ and $k(2)$ are buds $(b_{n_1},\cdots,b_{n_{k(2)}})$ for some $k(1)+k(2)=d-1$. The buds are sorted according to their indices $0\leq n_1<n_2<\cdots <n_{k(2)}\leq d-2$. \par

We decompose the bijection in 2 steps. The two first lines of Fig. \ref{fig:cut} illustrate the construction.\medbreak

\noindent \textbf{Step 1:} for any $ i\in\{1,\cdots,k(2)\}$, the selected bud $b_{n_i}$ is transformed into the tree $\Big(\Bg^{(n_i)},\Bell^{(n_i)}_{\Bg}\Big)$, of the image. This tree is reduced to its root $\Bg^{(n_i)}=\{`e\}$, and it is marked at it, so that $\Bell^{(n_i)}_{\Bg}=\{`e\}$. This produces $k(2)$ trees.\\~\\
\noindent \textbf{Step 2:}  Construct the increasing sequence ${\sf RemainingIndices}$  of the trees $\big(\Bg^{(j)},\Bell^{(j)}_\Bg\big)$ that remains to be built; it is made by the  $d-k(2)=k(1)+1$ elements of $\{0,\cdots,d-1\}\setminus\{n_1,\cdots,n_{k(2}\}$ sorted increasingly.\par
The construction of these marked trees is done by an algorithm.
Some marked edges are progressively transformed into marked leaves, so that the current tree on which we are working, which is an edge marked tree at the beginning, may become temporally a tree marked at some edges and some leaves:~ \textbf{an edge-and-leaf-marked tree} is a 3-tuple $(t,e,\ell)$ where $t\in \cup_n \A{n}{d}$, the marked edges $e\subset (E(t)\cup \Buds{d})$, the marked leaves $\ell \subset \partial t$.
\medbreak
At the beginning, set some variables: 
$(t,e,\ell) :=(\bt_n,\Be,\varnothing),  {\sf Rem}= {\sf RemainingIndices}$. These variables will evolve during the algorithm execution. \par

Initially $|{\sf Rem}|=|e|+1$, and this property will be preserved all along the algorithm execution ($|{\sf Rem}|$ and $|e|$ decrease simultaneously). \\

\centerline{--------------------------}~\\
\noindent {\bf Repeat until the complete disappearance of the elements of $e$}\\
\indent\bls Take $i^\star:= \max\{ {\sf Rem}\}$ the largest remaining index to treat.\\
\indent\bls If $e$ is empty, then there are no marked edge, so that ${\sf Rem}$ contains only $i^\star$. Set $\Big(\Bg^{(i^\star)},\Bell^{(i^\star)}_\Bg\Big)= (t,\ell)$ and the algorithm finishes here.\\
\indent\bls If $e$ is not empty, then find $(u,p(u))$ the largest marked edge of $e$ for the lexicographical order. 
\begin{itemize}
\item[--] First intuitively, the marked tree  $\left(\Bg^{(i^\star)}, \Bell^{(i^\star)}\r)$ is the subtree of $t$ attached at $u$. Formally, we need to express this using our formalism, a tree being a set of words (containing $`e$). Define the set of descendants of $u$ in $t$ is
  \[{\sf Descendants}_t(u):=\{w\in t~:u\textrm{ is a prefix of } w\}\] (including $u$): the marked leaves supported by this set as
  \[\ell(u):=\ell\cap {\sf Descendants}_t(u).\] We have now to remove the prefix $u$ to all the elements of ${\sf Descendants}_t(u)$ to turn it into a tree (in a tree, the root is the empty word $`e$, when in the set ${\sf Descendants}(u)$, the natural root is $u$). 
  Set
  \ben
  \bpar{ccl}\Bg^{(i^\star)}&=&\{w \in \Words{d}, uw \in {\sf  Descendants}_t(u)\},\\
  \Bell^{(i^\star)}&=&\{w \in \Words{d}, uw \in \ell(u)\}.\epar
  \een
\item[--] Now, detach from $t$ the subtree  attached at $u$ (see Fig. \ref{fig:cut}), and update everything as detailed in the four following points:\\
  $\bullet$ set $t:= (t \setminus {\sf  Descendants}(u))\cup \{u\}$ the node $u$ is somehow duplicated, and the descendants of $u$ removed from $t$\\
  $\bullet$ $\ell= (\ell \setminus \ell(u)) \cup \{u\}$ meaning that $u$, which is now a leaf of $t$, is marked and then, added to $\ell$, when the marked leaf of the descendant of $u$ are removed, if any\\
  $\bullet$ Remove $(u,p(u))$ from the set of marked edges $e$ (set $e=e \setminus\{(u,p(u))\}$),\\
  $\bullet$ set ${\sf Rem}:={\sf Rem}\setminus\{i^\star\}$ (the tree $(\Bg^{(i^\star)},\Bell^{(i^\star)})$ is fixed, so that the index $i^\star$ is removed from the set of indices to be treated).
\end{itemize}
\centerline{----------- End of the algorithm -----------}~\\

\begin{lem} $\Cut$ is a bijection.
\end{lem}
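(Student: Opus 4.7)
The letter $\Ba$ is carried through unchanged, so it suffices to show that the map
\[
(\bt_n,\Be)\longmapsto \bigl((\Bg^{(i)},\Bell^{(i)}_\Bg)\bigr)_{0\le i\le d-1}
\]
is a bijection from $\EA{n}{d}$ onto $\Fp{n}{d}$. My plan is to exhibit an explicit inverse $\Cut^{-1}$ and to show that it is well defined exactly on excursion-type forests; this will simultaneously give the inclusion $\Cut(\EA{n}{d})\subset\Fp{n}{d}$ and the bijection property.

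The inverse reverses the cutting step by step. Bud pieces are read off as the single-node marked trees in the forest, since every cut piece $\Bg^{(i)}$ is the subtree of $\bt_n$ at an internal node and therefore contains at least $d+1\ge 3$ nodes. This fixes the bud indices $n_1<\cdots<n_{k(2)}$ and the cut-piece indices $i_0<i_1<\cdots<i_{k(1)}$; $\Bg^{(i_0)}$ is designated the root piece. One sets $t:=\Bg^{(i_0)}$, $\ell:=\Bell^{(i_0)}$, $\Be:=\{b_{n_1},\ldots,b_{n_{k(2)}}\}$, and for $j=1,\ldots,k(1)$ picks the lexicographically \emph{smallest} $u\in\ell$, grafts $\Bg^{(i_j)}$ at $u$ (prefix every node by $u$, delete $u$ from $\ell$, add the prefixed $\Bell^{(i_j)}$ to $\ell$), and appends $(u,p(u))$ to $\Be$. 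Correctness rests on the invariant that, at the end of forward step $k$, the vertex $u_k$ just cut is the lex-smallest marked leaf of the current tree $t^{(k)}$. This is proved by induction on $k$: the marked leaves of $t^{(k)}$ form a subset of $\{u_1,\ldots,u_k\}$ because an earlier $u_j$ is either swept into the piece detached at step $k$ (when $u_k$ is an ancestor of $u_j$) or survives unchanged in $t^{(k)}$, and the forward rule guarantees $u_k<u_j$ in lex for every $j<k$. Since the piece detached at forward step $k$ lies at index $i_{k(1)+1-k}$, reversing the cuts in decreasing $k$ reattaches the pieces in increasing index order at the lex-smallest marked leaf, which is exactly what $\Cut^{-1}$ does; hence $\Cut^{-1}\circ\Cut=\Id$, and the symmetric identity follows from the determinism of both procedures together with the same invariant.

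To finish, one must check that $\Cut^{-1}$ runs to completion on precisely $\Fp{n}{d}$. Setting $m_i=|\Bell^{(i)}|$, the marked-leaf count of $t$ just before the $j$-th grafting equals $m_{i_0}+\sum_{r=1}^{j-1}(m_{i_r}-1)$, and must be at least $1$. Since buds sit at the complementary positions and each contributes exactly one marked leaf, the partial sums $s_\ell$ of the leaf sequence are constant across bud positions; a short count shows that the requirement at $j=1,\ldots,k(1)$ is equivalent to $s_{i_{j-1}+1}\ge 0$, and together with bud-position constancy this is the full condition $s_\ell\ge 0$ for $\ell\le d-1$. The equality $s_d=-1$ comes for free from the total leaf count being $d-1$. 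The main technical obstacle in all of this is really just the invariant on the lex-smallest marked leaf, which requires a careful case analysis on whether previously-cut tops are descendants of the current cut edge; everything else reduces to bookkeeping.
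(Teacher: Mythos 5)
Your strategy---construct $\Cut^{-1}$ explicitly, prove $\Cut^{-1}\circ\Cut=\Id$ via the lex-smallest-marked-leaf invariant, and characterize the exact domain of the inverse---is genuinely different from the paper's proof, which establishes injectivity and the inclusion of the image in $\Fp{n}{d}\times\cro{1,d}$, and then concludes by the cardinality identity $|\Fp{n}{d}|=|\F{n}{d}|/d$ coming from the rotation principle (your route is closer to the paper's $\Cut^{-1}$ subsection, whose verification the paper leaves as an exercise). The route is viable and would even avoid the counting step, but as written it has a genuine gap at its very first move. You justify reading off the bud pieces as the single-node marked trees by claiming that ``every cut piece $\Bg^{(i)}$ is the subtree of $\bt_n$ at an internal node and therefore contains at least $d+1\ge 3$ nodes.'' This is false: $\Be$ is an arbitrary $(d-1)$-subset of $E(\bt_n)\cup \Buds{d}$, so a marked edge $(u,p(u))$ may have $u$ a leaf of $\bt_n$ (or a leaf of the current tree at the moment it is cut), in which case the detached piece is a single node. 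Size therefore cannot separate bud pieces from cut pieces. The correct disambiguation---exactly the point the paper flags as ``the first point to notice''---is the marking: a single-node cut piece is never marked at its root, because a marked leaf of the evolving tree is a former cut point and the algorithm never cuts the same edge twice, whereas bud pieces are always marked at their root. Your identification rule happens to be the right one, but the argument you give for it is wrong, and this is precisely the step on which both injectivity and the well-definedness of your inverse rest.

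There is a second gap in the claim that the reconstruction ``runs to completion on precisely $\Fp{n}{d}$.'' Take $d=2$ and the forest whose piece $0$ is a single unmarked node and whose piece $1$ is a single marked node: there is no grafting to perform ($k(1)=0$), so your marked-leaf-count conditions hold vacuously, yet the leaf sequence is $(0,-1,-1)$, which is not an excursion, and the reconstruction would have to output the mark $b_1\notin\Buds{2}$. What is missing is the constraint at index $d-1$: for an excursion-type forest, $s_{d-1}\ge 0$ and $s_d=-1$ force $|\Bell^{(d-1)}|=0$, so the last piece is never a bud piece; conversely, a forest whose last piece is single-node marked passes all your grafting tests but is not of excursion type. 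Since you use the ``exactly'' statement to deduce the inclusion $\Cut(\EA{n}{d}\times\cro{1,d})\subset\Fp{n}{d}\times\cro{1,d}$---which is the hardest part of the lemma, the one the paper proves by its fragment-counting argument ($N_j\ge 1$)---that inclusion is not actually established by your argument as written. Both gaps are repairable (the invariant itself, the index bookkeeping $i_{k(1)+1-k}$, and the equivalence between graftability and nonnegativity of $s$ at the positions following cut indices are all correct), but the repairs require precisely the two observations above.
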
 
\begin{proof} First, let us see why $\Cut$ is injective: probably the first point to notice is that the trees created by Step 2 can also be reduced to their root, but in this case, this root is not marked. Taking this into account, the fact that the images of different elements in  $\EA{n}{d}\times\cro{1,d}$ are different is simple to see. What is less simple, is the fact that the image of $\Cut$ is indeed included in $\Fp{n}{d}\times\cro{1,d}$, that is $\l((\Bg^{(i)},\Bell^{(i)}),1\leq i \leq d\r)\in \Fp{n}{d}$. In view of \eref{eq:dqftfedq} it would be enough to conclude.\par
  As a matter of fact, it is simple to see that the image  of each edge marked tree from $\EA{n}{d}$ is a forest marked at $d-1$ leaves (each marked bud and marked edge is, at some time, transformed into a marked leaf). Only the fact that this forest has the excursion type needs to be shown. \par
The indices $n_1, \cdots, n_{k(2)}$ of the buds $b_{n_1}, ..., b_{n_{k(2)}}$ becomes the indices of some trees reduced to their roots, marked. Hence, the corresponding increments $|\ell^{(n_i)}|-1$ equal 0. Hence, the fact that the excursion property is satisfied depends basically on the other increments.\\
 Notice that the index $n_{k(2)}<d-1$ since the buds are labeled from 0 to $d-2$: the last tree of the forest $(\Bg^{(d-1)},\Bell^{(d-1)})$ comes from the first detached fragment of $t$, so that it has no marked leaf, and thus, the last increment is $|\ell^{(d-1)}|-1=-1$ (as for all forests of excursion type).

   It remains to prove that  $s_j(f)=\sum_{k=0}^{j-1}|\ell^{(k)}|-1\geq 0$ for $j\leq d-1$. Since, the buds contribution  $|\ell^{(n_i)}|-1$ equal 0, somehow, we can ignore these increments and restrict ourselves to the trees
  $((\Bg^{(m_j)},\Bell^{(m_j)}), 1\leq j \in K)$ where $m_1,\cdots,m_K$ is the list of indices (taken under their initial order) of the trees obtained by the decomposition of $t$ (using Step 2).
  Now, the conclusion is simple: each tree $\Bg^{(m_i)}$ upon creation, creates a leaf in another other component with smaller index (since it is still attached to the current tree $t$). Hence, for any $\ell$,
  \[N_j:=|\ell^{(m_1)}|+\sum_{k=2}^{j-1}(|\ell^{(m_k)}|-1)=s_{m_j}(f)+1\]
  can be interpreted as the number of fragments that were attached to the leaves of the union of the fragments $m_1,\cdots,m_{j-1}$ from what we removed $1$, for the fragments $2$ to $j-1$, so that $N_j$ is the number of fragments that were attached to the $j-1$ first fragments, and which have not been visited yet:
it is clear that $N_j\geq 1$, as long as $j<K-1$ from what the conclusion follows.
\end{proof}

\subsection*{The map $\Cut^{-1}$}

We present the bijection $\Cut^{-1}$ which is simpler:
\ben
  \app{\Cut^{-1}}{\Fp{n}{d}\times\cro{1,d} }{\EA{n}{d}\times\cro{1,d}}{\l[\l(\Big(\Bg^{(i)},\Bell^{(i)}_{\Bg}\Big),0\leq i \leq d-1\r),\Ba\r]}{ \l((\bt_n, \Be), \Ba\r)}
  \een
  Again, the letter $\Ba$ is preserved so that let us focus on the rest.
  Assume that $\l(\Big(\Bg^{(i)},\Bell^{(i)}_{\Bg}\Big),0\leq i \leq d-1\r)$ is given, and let us ``reconstruct'' $(\bt_n, \Be)$.\\
Step 1. Construct the set $S$ of indices $i$ such that  $\Bg^{(i)}=\{`e\}$ and $\Bell^{(i)}_{\Bg}=\{`e\}$, which corresponds to trees reduced to a single node, which is marked. From what is said above, for all such $i\in S$, $i<d-1$. The subset of returned marked buds will be $\{b_i, i \in S\}$: set temporarily $\Be=\{b_i, i \in S\}$, since it is part of the set $\Be$ to be defined.\\
Step 2. For the sequence of ${\sf RemainingIndices} =(m_1,\cdots,m_{d-|S|})$ sorted increasingly.
Set
\[(T,L,E)=(\Bg^{(m_1)},\Bell^{(m_1)},\varnothing)\] a variable of the algorithm, which is an edge and leaf marked tree, which at the beginning, coincides with the right most element of the list of remaining trees to be treated (those that are not reduced to a marked leaf). It will evolve during the algorithm execution; at the beginning it is not marked on any edges.\par

For $k=2$ to $d-|S|$ do the following:\\
$\bullet$ Plug the tree $\Bg^{(m_k)}$ at the first leaf $u$ (for the lex. order) of $T$.\\
$\bullet$ Add the edge $(u,p(u))$ at $E$ (set $E=E\cup\{(u,p(u))\}$). Formally, the tree $T$ obtained after this operation is $T=T+u\Bg^{(m_k)}$, and $L=( L \cup u\Bell^{(m_k)})\setminus\{u\}$ (since the prefix $u$, added to $\Bg^{(m_k)}$ gives a subtree of $T$ isomorphic to $\Bg^{(m_k)}$). \medskip  

At the end set $(\bt_n,\Be)=(T,E)$.  \par

The fact that the function $\Cut^{-1}$ is indeed the inverse map of  $\Cut^{-1}$ should be clear, and is left as an exercise.

\subsection{The map $\Rotate$}
The rotate map is illustrated on Fig. \ref{fig:cut} 
\ben
\app{\Rotate}{\Fp{n}{d}\times\cro{1,d}}{ \F{n}{d}}{\l[\l(\Big(\Bg^{(i)},\Bell^{(i)}_{\Bg}\Big),0\leq i \leq d-1\r),\Ba\r]}{\l(\Big(\Bf^{(i)},\Bell^{(i)}\Big),0\leq i\leq d-1\r)}.
\een
This map is just the rotation of indices (by $\Ba$ in $\Z/d\Z$) defined by
\[\Big(\Bf^{(i)},\Bell^{(i)}\Big):=\l(\Bg^{(i+\textbf{a} \mod d)},\Bell_g^{(i+\textbf{a} \mod d)}\r),~~\textrm{ for all }i \in \Z/d\Z.\]
The fact that this map is invertible is one of the key point of the proof: it is not that obvious because, $\Ba$ needs to be recovered too! The argument is developed in Section \ref{sec:Trp} (second statement of Lemma \ref{lem:rp}). 
Using this Lemma, given an element $F:=\l(\Big(\Bf^{(i)},\Bell^{(i)}\Big),0\leq i\leq d-1\r)\in  \F{n}{d}$, there exists a unique element ${\bf b}\in \cro{1,d}$ such that $\l(\Big(\Bf^{(i+{\bf b} \mod d)},\Bell^{(i+{\bf b}\mod d)}\Big),0\leq i \leq d-1\r)\in \Fp{n}{d}$, and then
\[\Rotate^{-1}\l(\Bf^{(i)},\Bell^{(i)}\r)=\l[\l(\l(\Bf^{(i+{\bf b}\mod d)},\Bell^{(i+{\bf b}\mod d)}\r),0\leq i \leq d-1\r),{\bf b}\r].\]

\subsection{The map $\AddRoot$}
The map $\AddRoot$ is totally trivial and its name suffices to understand its action.
\[\app{\AddRoot}{\F{n}{d}}{\LAA{n+1}{d}{d-1}}{\l(\Big(\Bf^{(i)},\Bell^{(i)}\Big),0\leq i\leq d-1\r)}{(\bt_{n+1},\Bell)}\] 
Formally, we have 
\[\bt_{n+1}=\{`e\}\bigcup_{i=0}^{d-1} i \Bf^{(i)}, ~~~~\Bell=\bigcup_{i=0}^{d-1} i \Bell^{(i)},\]
in words: $\bt_{n+1}$ is the marked tree whose subtrees form the original forest\footnote{again $i\Bf^{(i)}$ is the set by adding $i$ as a prefix to all the nodes of $\Bf^{(i)}$, so that in $\bt_{n+1}$ the subtree rooted at $i$ is isomorphic to $\Bf^{(i)}$}.\\
\bls The reverse map $\app{\AddRoot^{-1}}{\LAA{n+1}{d}{d-1}}{\F{n}{d}}{(\bt_{n+1},\Bell)}{\l(\Big(\Bf^{(0)},\Bell^{(0)}\Big),\cdots,\Big(\Bf^{(d-1)},\Bell^{(d-1)}\Big)\r) }$, amounts to removing the root of $\bt_{n+1}$ while conserving the marked leaves, and to returning the sequence of subtrees of $\bt_{n+1}$ rooted at the children of the root,  according to their initial order.

\subsection{The rotation principle}
\label{sec:Trp}

Set
\ben
{\sf Seq}_m&=&\{s\cro{0,m}~: s_0=0, s_m=-1, s_{j+1}-s_j \geq -1,~ \forall~ 0\leq j \leq m-1\},\\
\Excursions(m) &=& {\sf Seq}_m \cap \{s\cro{0,m}~: s_0\geq 0, \cdots, s_{m-1}\geq 0, s_m=-1\}.
\een  
The first set is sometimes called the set of {\L}ukasiewicz walks, and the second, the set of excursions.
They are set of length $m$ paths with integer values, and increment bounded from below by $-1$, that ends at $-1$; excursions have the additional property to hit $-1$ for the first time at the end.

Denote by $\Delta s_{j-1} = s_j-s_{j-1}$ the $j$th increment of the path $s$. Of course, the increments $(\Delta s_j, 0\leq j \leq m-1)$ characterizes $s\cro{0,m}$, since
\[s_j = \Delta s_0+\cdots+\Delta s_{j-1}.\]
For any $r \in\cro{0,m-1}$, the $r$th rotation is a map on ${\sf Seq}_m$
\[\app{\Rot_r}{{\sf Seq}_m}{{\sf Seq}_m}{s\cro{0,m}}{s'\cro{0,m}}\]
which  is better seen on the increments, which are subjected to a simple rotation around the  $\Z/m\Z$:
\[\Delta s'_i= \Delta s_{i+r \mod m},~~\textrm{ for }i\in\cro{0,m-1}.\]
A rotation class of an element $s\in {\sf Seq}_m$ is defined by
\[{\sf RotationClass}(s)=\{\Rot_0(s),\Rot_1(s),\cdots,\Rot_{m-1}(s)\},\] and two elements $s$ and $s'$ of ${\sf Seq}_m$ are said to be in the same rotation class, if there exists $r \in\cro{0,m-1}$ such that $s=\Rot_r(s')$. Of course $G_m:=\{\Rot_r, 0\leq r \leq m-1\}$
is a group for the composition, isomorphic to $(\Z/m\Z,+)$, so that, it is easily seen that rotation classes are equivalence classes. 

The following result can be found under several forms in the combinatorics literature (rotation principle), and can be found in Otter \cite{Otter}:
\begin{lem}\label{lem:rp} For any $m\geq 1$, each rotation class possesses $m$ different elements, and exactly one of these element belongs to $\Excursions(m)$.\\
  Hence, for any $s\in {\sf Seq}_m$, there exists a unique $a\in \Z/m\Z$ such that $\Rot_{a}(s)\in \Excursions(m)$.
\end{lem}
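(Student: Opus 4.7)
The plan is to prove two things separately: first, that no non-trivial rotation fixes any element of ${\sf Seq}_m$, which forces rotation classes to have cardinality exactly $m$; second, that each rotation class contains exactly one excursion, by exhibiting the excursion explicitly and then checking uniqueness.

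For the cardinality claim, suppose $\Rot_r(s)=s$ with $r\in\cro{1,m-1}$. The cyclic increment sequence is then invariant under shift by $r$, hence under shift by $p:=\gcd(r,m)$; since $p\leq r<m$, the full increment sequence is the $(m/p)$-fold repetition of a length-$p$ block, giving $s_m=(m/p)\,s_p$. But $s_m=-1$ and $m/p\geq 2$ is a positive integer, which is absurd. Hence every non-zero rotation moves $s$, and rotation classes all have exactly $m$ elements.

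For existence of an excursion in a given class, fix $s\in{\sf Seq}_m$ and let $r\in\cro{0,m-1}$ be the first index at which $s$ attains its minimum $M:=\min_{j\in\cro{0,m-1}}s_j$. A direct computation gives
\[
(\Rot_r(s))_j=\begin{cases} s_{j+r}-s_r & \text{if }j\in\cro{0,m-r},\\ s_{j+r-m}-s_r-1 & \text{if }j\in\cro{m-r,m}.\end{cases}
\]
One then checks by a short case analysis that $\Rot_r(s)\in\Excursions(m)$: when $r=0$, $s$ is already an excursion ($M=0=s_0$); when $r\geq 1$, one has $s_r=M\leq -1$ (since $s_0=0>M$), so the first formula is non-negative by minimality of $M$ (including $s_m=-1\geq M$ at the boundary $j=m-r$), and the second formula is non-negative because $s_k>M$ for $k\in\cro{1,r-1}$ — strict, because $r$ is the \emph{first} time the minimum is attained — hence $s_k\geq M+1$.

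Uniqueness is immediate: if $s,s'\in\Excursions(m)$ lie in the same class with $s'=\Rot_r(s)$ and $r\neq 0$, then $s'_{m-r}=s_m-s_r=-1-s_r$, forcing $s_r\leq -1$; but $s\in\Excursions(m)$ gives $s_r\geq 0$, a contradiction. Hence $r=0$ and $s=s'$. The only delicate step is the case analysis in the existence part, and the crucial subtlety is choosing the \emph{first} occurrence of the minimum: any later choice would violate the strict inequalities needed in the wrap-around range $\cro{m-r+1,m-1}$ and produce a negative value of $\Rot_r(s)$ there.
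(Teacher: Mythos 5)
Your proof is correct, and it overlaps with the paper's argument only in the existence step: both produce the excursion in a given rotation class by rotating at the first time the path attains its minimum (the paper's $a(s)=\min \argmin(s)$ is your $r$, up to the harmless convention of whether position $m$ is included in the minimization). Where you genuinely diverge is in the other two claims. The paper extracts everything from that single invariant: for an excursion $s$, the rotation $\Rot_r(s)$ attains its minimum for the first time at position $m-r$, so the $m$ rotations are pairwise distinct because this position distinguishes them, and uniqueness of the excursion in the class drops out of the same computation. You instead rule out nontrivial stabilizers by a periodicity/divisibility argument ($\Rot_r(s)=s$ forces the increment sequence to be periodic with period $p=\gcd(r,m)<m$, whence $-1=s_m=(m/p)\,s_p$ with $m/p\geq 2$, impossible), and you prove uniqueness by the direct computation $s'_{m-r}=-1-s_r$, which is negative for an excursion $s$ and $r\neq 0$. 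Your stabilizer argument is slightly more general: it applies verbatim to every element of ${\sf Seq}_m$ without first locating an excursion in its class, and it is the form of the cycle lemma that adapts to paths with total increment $-k$, where the gcd genuinely matters; the paper's single-invariant argument is more economical, deriving existence, distinctness, and uniqueness from one quantity. A further point in your favor is completeness: you write out the wrap-around formula for $\Rot_r(s)$ and carry out the case analysis explicitly, which is precisely the ``simple analysis'' the paper leaves to the reader, and your closing observation about why the \emph{first} occurrence of the minimum is essential is correct (choosing a later occurrence would make the rotated path dip to $-1$ in the wrap-around range).
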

\begin{proof} Maybe, the simplest proof relies on the action of $\Rot_r$ on the first time $\min \argmin (s)$ a path $s$ hits its minimum for the first time. Observe Fig. \ref{mi}.   Consider the rotation class of a given element $s\in {\sf Seq}(m)$.
  A simple analysis shows that, for $a(s)= \min \argmin (s)$, we have
  \[\Rot_{a(s)}(s) \in \Excursions(m)\]
  so that there is (at least) one excursion in each rotation class, and for any $s\in\Excursions(m)$
  \[a( \Rot_r(s))= m-r,\]
implying that all the elements of the rotation class of an excursion reaches its minimum for the first time at a different place, so that, all of them are different.
  \begin{figure}[htbp]
 \centerline{
\includegraphics[width=14cm, height=6cm]{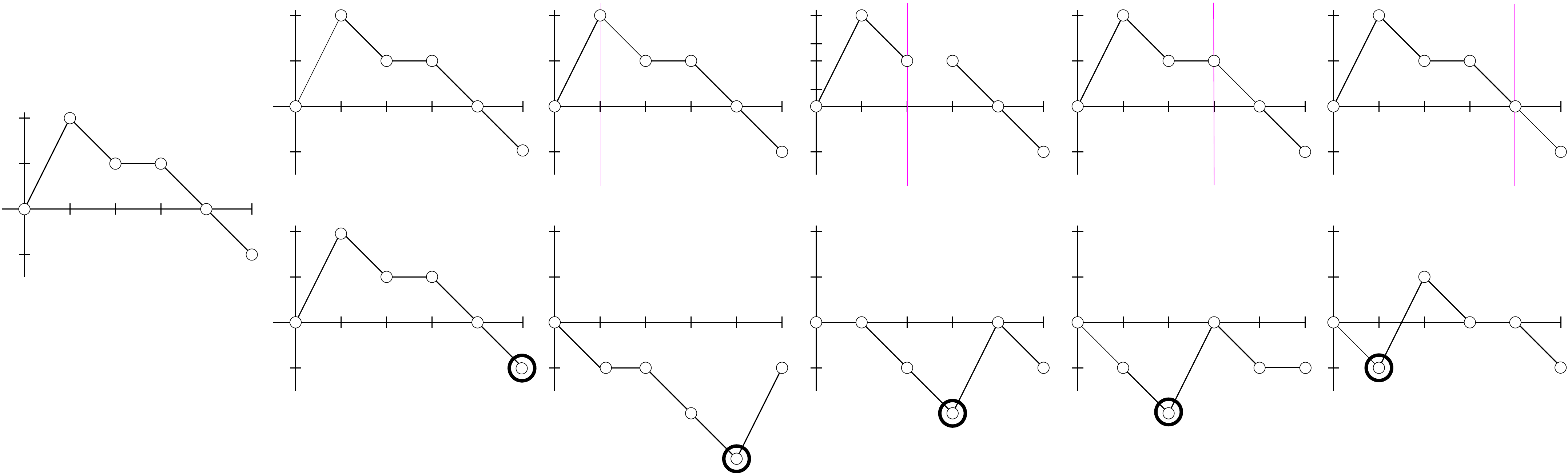}}
\captionn{\label{mi}A path $s$ from $\Excursions(m)$, for $m=5$, and its 5 rotations $\Rot_0(s),\cdots,\Rot_{m-1}(s)$ on the line below: each of them are different (they reach their minimum for the first time at different places).}
\end{figure}

\end{proof}

\section{Random generation of a sequence of growing uniform trees}
\label{sec:qgegf}
We first present the principle of the random generation, and then we will see why the sequence of bijections $({\sf Enlarge}_{k,d},k\geq 0)$ can be used to generate a uniform tree $\bt_{n}$ in $\A{n}{d}$ at a linear cost (for a cost model we will detail).

Assume that $\bt_k$ is uniform in $\A{k}{d}$ (start this recursion at $\bt_0$ the tree reduced to its root $`e$).\\ ~\\
-------- Algo --------\\
choose a uniform subset ${\bf e}$ of $E(\bt_k)\cup \Buds{d}$,\\
2. independently, choose a uniform letter ${\bf a}\in \Alphabet{d}$.\\
3. Compute $(\bt_{k+1},\Bell):={\sf Enlarge}_{k,d}( \bt_k,{\bf e},{\bf a})$.\\
--------------------------\\

When $(\bt_{k+1},\Bell)$ has been computed, the tree $\bt_{k+1}$ is obtained by a simple projection (which amounts to forgetting the marked edges).
\begin{lem} $\bt_{k+1}$ is uniform in $\A{k+1}{d}$\end{lem}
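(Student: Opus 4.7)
\medskip

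\textbf{Proof plan.} The plan is to show that the triple $(\bt_k,\Be,\Ba)$ produced by the algorithm is uniform on $\EA{k}{d}\times\cro{1,d}$, then transport this uniformity through the bijection ${\sf Enlarge}$, and finally project out the leaf marks by a fiber-counting argument.

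First, I would compute the joint distribution of $(\bt_k,\Be,\Ba)$. By assumption $\bt_k$ is uniform on $\A{k}{d}$; conditionally on $\bt_k=t$, the set $\Be$ is uniform on $\SS{E(t)\cup\Buds{d}}{d-1}$, which has cardinality $\binom{dk+d-1}{d-1}$ by \eqref{eq:sfhtejku}; and $\Ba$ is independent and uniform on $\cro{1,d}$. Hence for any $(t,e,a)\in \EA{k}{d}\times\cro{1,d}$,
\[
\P(\bt_k=t,\Be=e,\Ba=a)=\frac{1}{|\A{k}{d}|}\cdot\frac{1}{\binom{dk+d-1}{d-1}}\cdot\frac{1}{d},
\]
which, using \eqref{eq:sqdrg}, equals $1/|\EA{k}{d}\times\cro{1,d}|$. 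So $(\bt_k,\Be,\Ba)$ is uniform on $\EA{k}{d}\times\cro{1,d}$.

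Next, I would invoke the lemma showing that ${\sf Enlarge}=\AddRoot\circ\Rotate\circ\Cut$ is a bijection from $\EA{k}{d}\times\cro{1,d}$ onto $\LAA{k+1}{d}{d-1}$. Since the image of a uniform random variable under a bijection is uniform on the image, $(\bt_{k+1},\Bell)={\sf Enlarge}(\bt_k,\Be,\Ba)$ is uniform on $\LAA{k+1}{d}{d-1}$.

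Finally, I would show that the marginal of $\bt_{k+1}$ is uniform on $\A{k+1}{d}$ by a fiber-counting argument. For any fixed $t\in\A{k+1}{d}$, the fiber $\{(t,\ell)\in\LAA{k+1}{d}{d-1}\}$ is in bijection with $\SS{\partial t}{d-1}$, whose cardinality is $\binom{(d-1)(k+1)+1}{d-1}$ by \eqref{eq:sfhtejku}; crucially, this number depends only on $k$ and $d$, not on $t$. Therefore
\[
\P(\bt_{k+1}=t)=\sum_{\ell}\P(\bt_{k+1}=t,\Bell=\ell)=\frac{\binom{(d-1)(k+1)+1}{d-1}}{|\LAA{k+1}{d}{d-1}|},
\]
which is constant in $t$, proving uniformity. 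There is no real obstacle; the entire content of the statement is packaged in the bijectivity of ${\sf Enlarge}$ together with the observation that the $\Bell$-fibers all have the same size.
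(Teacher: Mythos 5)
Your proof is correct and follows essentially the same route as the paper: establish that $(\bt_k,\Be,\Ba)$ is uniform on $\EA{k}{d}\times\cro{1,d}$, push this through the bijection ${\sf Enlarge}$, and use the fact that every tree $t\in\A{k+1}{d}$ has the same number $\binom{(d-1)(k+1)+1}{d-1}$ of leaf-markings to conclude by fiber counting. The only cosmetic difference is that the paper finishes by explicitly evaluating $\P(\bt_{k+1}=t)$ as $1/|\A{k+1}{d}|$ via \eqref{eq:sqdrg} and \eqref{eq:deco}, whereas you conclude from constancy of the fiber probabilities; both are fine.
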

\begin{proof}Each element in the support of $(\bt_k,{\bf e},{\bf a})$ has weight $1/(d \times |\EA{k}{d}|)$, and the number of pairs $(\bt_{k+1},\ell)$ corresponding to a given $\bt_{k+1}=t$ is the same for all trees $t$, that is the number of ways to mark the leaves of $t$, that is $\binom{(d-1)(n+1)+1}{d-1}$: hence by \eref{eq:sqdrg} and \eref{eq:deco}
    \[\P(\bt_{k+1}=t)=\frac{\binom{(d-1)(k+1)+1}{d-1}}{d \times |\EA{k}{d}|}=\frac{1}{\A{k+1}{d}}.\]\end{proof}

\subsection*{About the cost of this generation algorithm}

To define the cost we need to explain a bit how the map ${\sf Enlarge}$ can be programmed, how to proceed to make the number of elementary operations as small as possible, and to fix the cost of the elementary operations. \par
First, ${\sf Enlarge}$ needs to move typically $d-1$ subtrees of $\A{n}{d}$. Hence, some efficient operations are needed to find and move these subtrees. 
If the tree is encoded using some pointers with a link between each node and its parent in the tree, we may assume that the addition of a new node has a constant cost\footnote{it could be also natural to assume that this cost in $O(\log n)$, to take into account the size of the pointers}, and the redirection of some links can be also supposed to have a constant cost, if a table with the list of the nodes is available. The choice of $d$ different edges can be done simply by choosing random nodes, and by identifying each edge with the higher node it contains (to do that, pick some ranks $\floor{(U(dn+d-1))}$ using uniform random variable  $U\sim{\sf Uniform}([0,1])$ till $d-1$ different ranks are obtained); this costs a $O(1)$ number of calls to the random number generator\footnote{an extra cost of $O(\log n)$ to take into account the bit cost of this random generation is also a natural model}.

To get an efficient encoding of the generation algorithm, it is also needed to reach each existing node (those with ranks taken at random, notably) in a fast way. We assume that the cost to reach a given vertex has a constant time \footnote{a cost $O(\log n)$ is also a natural model}.

For this model, the total cost of this algorithm forms to produce a uniform tree in $\A{n}{d}$ from the single tree of size 0, is linear in $n$.

\section{Comparison with Rémy's bijection}
\label{sec:CRB}
This section is devoted to a small discussion of the difference between our algorithm in the case $d=2$ with Rémy's (they are indeed rather different). A third algorithm is presented in section \ref{sec:third}. We skip some details and just talk about the bijection between $|\EA{n}{d}\times\{1,2\}|$ and $\LAA{n+1}{2}{1}$:
trees of size $n$ marked on a edge by $0$ or $1$ and trees of size $n+1$ marked on a leaf.

\subsection{Rémy algorithm}
Rémy's algorithm is illustrated on Fig. \ref{fig:rem}.\\
\begin{figure}[htbp]
   \centerline{\includegraphics{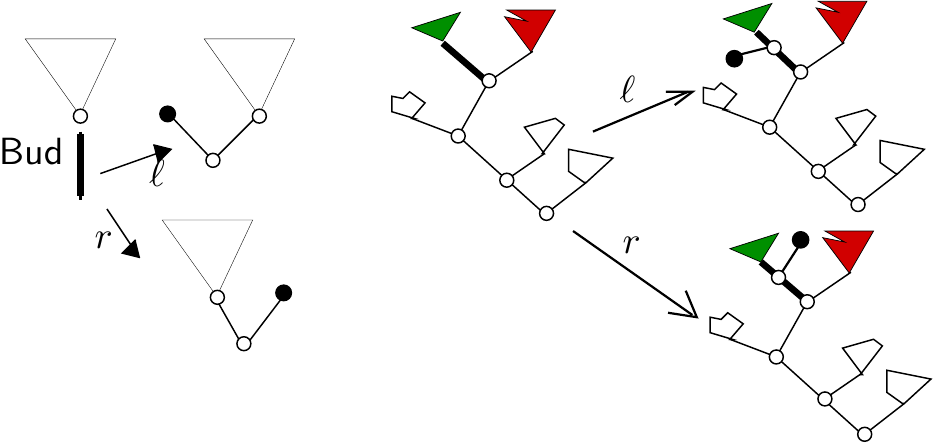}}
   \captionn{\label{fig:rem}Illustration of Rémy algorithm}
 \end{figure}~\vspace{-1mm}
Start with a tree $t$ with $n$ internal nodes, and then, $2n$ edges. There is a marked edge which is in $E(t) \cup {\sf Buds}(2)$ with ${\sf Buds}(2)=\{b_0\}$ which is an available bud. There is a letter $\Ba$ in ${\sf Alphabet}(2)=\{1,2\}$, but in general, in the usual presentation of the bijection, the letter $\Ba$ is rather chosen in $\{r,\ell\}$, ``right'' or ``left''.  \par

The bijection on which Rémy's algorithm is built is
\[\app{R}{\EA{n}{d}\times\{r,\ell\}}{\LAA{n+1}{2}{1}}{(t,e,a)}{(t',f)}\]
where:\\
\bls if the selected edge $e=(u,p(u))$ is in $E(t)$ then do the following: insert a node $w$ ``at the middle'' of the edge $e$, so that $u$ is a child of $w$.\\
. If  $\Ba=r$, then creates a right child of $f$ and marks this leaf (and then $u$ is the left child of $w$),\\
. If  $\Ba=\ell$, then creates a left child of $f$ and marks this leaf (and then $u$ is the right child of $w$),\\
This gives a tree $(t',f)$.\\
\bls if the selected edge is the bud $b_0$, then add a new node $u$ which will become the new root. To build $t'$:\\
-- if $\Ba=r$, add a right edge to $u$ and mark the new leaf $f$ at its extremity. Add a left edge from $u$ to the root of $t$,\\
-- if $\Ba=\ell$, add a left edge to $u$ and mark the new leaf $f$ at its extremity. Add a right edge from $u$ to the root of $t$.
  
\subsection{The new bijection in the binary case}
The new bijection ${\sf Enlarge}$ presented in \Cref{sec:intro} is different from Rémy's bijection because it modifies dramatically the neighborhood of the selected edge $(u,p(u))$, by moving the subtree rooted at $u$ at distance 1 of the root: see Figure \ref{fig:nb}.
 \begin{figure}[htbp]
   \centerline{\includegraphics{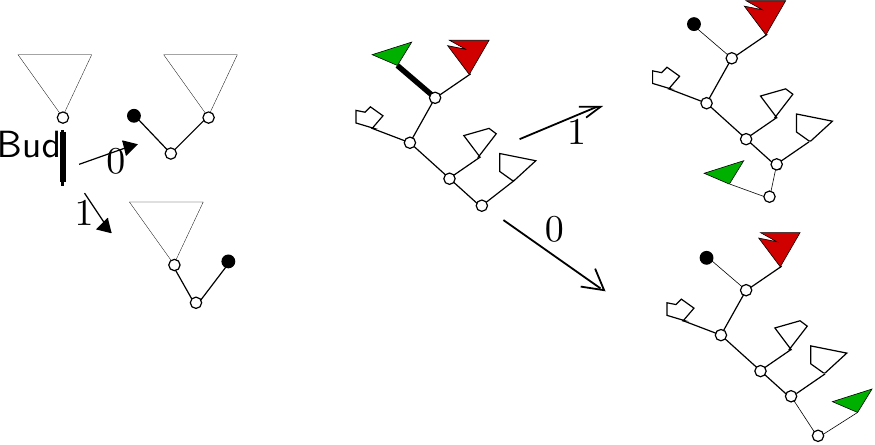}}
   \captionn{\label{fig:nb}Illustration of the new algorithm: the tree at the end of the marked edge becomes a subtree of the root.}
 \end{figure}~\vspace{-1mm}

\subsection{A third bijection in the binary case}
\label{sec:third}
 \begin{figure}[htbp]
   \centerline{\includegraphics{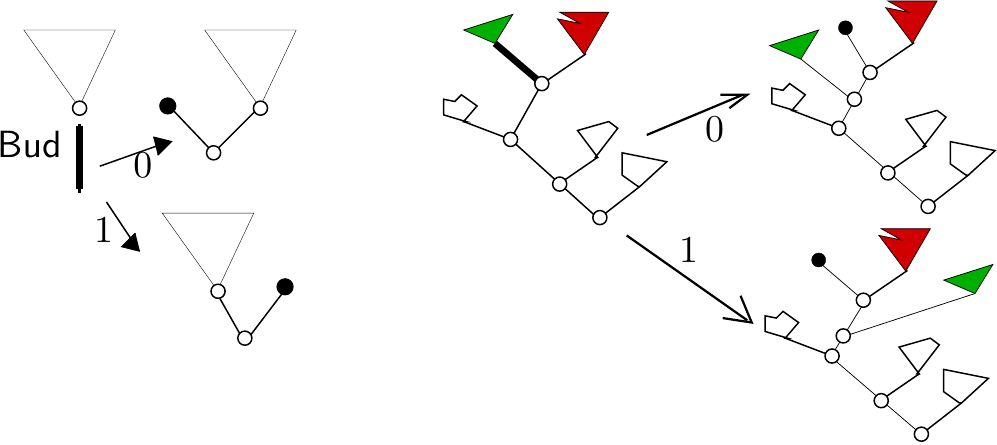}}
   \captionn{\label{fig:var}Illustration of the third algorithm}
 \end{figure}~\vspace{-1mm}

 We present a third bijection different from enlarge, and different from Rémy's, valid in the binary case too.
 It is illustrated on Fig. \ref{fig:var}.\\
 \bls If the marked edge \Be~ is the bud, then do the same thing as in Rémy's bijection,\\
 \bls If the marked edge \Be~ is an edge $(u,p(u))$ of $E(t)$, then do the following:  detach the subtree $T$ grafted at $u$ and mark the node $u$. Insert a new node $v$ in the edge $(p(u),p(p(u)))$, add a node $w$ as new child of $v$ (as a right child if $\Ba=r$, on the left if $\Ba=\ell$), and graft the subtree $T$ at $w$.
 If $p(u)$ is the root of the tree, then $p(p(u))$ is created too, as a new root.

\small
\bibliographystyle{abbrv}

  
\end{document}